\newcommand\SET[2]{\left\lbrace{\;#1\;:\;#2\;}\right\rbrace}
\newcommand\Reals{{\mathbb R}}
\newcommand\Integers{{\mathbb Z}}
\newcommand\Naturals{{\mathbb N}}
\newcommand\absval[1]{\left|{#1}\right|}
\newtheorem{prop}{Proposition}[section]
\newtheorem{thm}[prop]{Theorem}
\newtheorem{cor}[prop]{Corollary}
\newcommand\mattwor[1]{\left[{{\begin{array}{rr}#1\end{array}}}\right]}
\newcommand\mattwoc[1]{\left[{{\begin{array}{cc}#1\end{array}}}\right]}
\newcommand\matthreer[1]{\left[{{\begin{array}{rrr}#1\end{array}}}\right]}
\newcommand\matthreec[1]{\left[{{\begin{array}{ccc}#1\end{array}}}\right]}
\newcommand\textbox[1]{\hbox{\rm#1}}
\title{The divided cell algorithm and the inhomogeneous Lagrange and
  Markoff spectra }
\author{Richard T. Bumby \\
 Rutgers, the State University of New Jersey, \\ Department of
  Mathematics, Hill Center, Busch Campus, \\ 
110 Frelinghuysen Road, \\
  Piscataway, NJ 08854-8019, USA \\
{\tt bumby@math.rutgers.edu} 
\and 
Mary E. Flahive \\
 Department of Mathematics, \\ 
Oregon State University, \\ Corvallis, OR 97331-4605, USA \\
{\tt flahive@math.oregonstate.edu} }
\date{{\bf MSC:} 11J70,11J06, 11H16}
\begin{document}
\pagenumbering{arabic}
\addtocounter{secnumdepth}{1}

\maketitle

\begin{abstract}
The divided cell algorithm was introduced by Delone in 1947 
to calculate the inhomogeneous minima of 
binary quadratic forms and
developed further by
E. S. Barnes and H. P. F. Swinnerton-Dyer  in the 1950s.
We show how advances of the past fifty years 
in both symbolic computation 
and our understanding of homogeneous spectra can be combined to 
make divided cells more useful for organizing information about
inhomogeneous approximation problems.
A crucial part of our analysis relies on work of  Jane Pitman, who 
related the divided cell algorithm to  
the regular continued fraction algorithm.  
In particular, the relation to continued fractions allows two divided
cells for the same problem to be compared without stepping through the
chain of divided cells connecting them.
\end{abstract}

\section{Preliminaries}
Notational conventions and a basic framework for working with
approximation problems are collected here for the convenience of the
reader.

Diophantine Approximation problems deal with finding where the
restriction of a function to a special subset is small.  In this
paper, the function will be defined on the plane $\Reals^2$ and the
subset will be the integer lattice $\Integers^2$.  The function will
take nonnegative values, so ``small'' will mean ``close to zero''.  It
is customary to work with the reciprocal of the original function, and
to freely treat $\infty=1/0$ as a number, since it will be a possible
value of the supremum of a set of nonnegative numbers.  

By working with the name denoting a
function instead of the traditional convention of denoting a sequence
with subscripts, we are allowed the uncluttered notation $\limsup f$ to
denote the infimum over all cofinite subsets $S$ of $\Naturals$ of the
supremum of the values of $f$ restricted to $S$.

The computation of the infimum of a function can often be organized by
endowing the domain with a \emph{partial order} for which the given
function is order preserving.  If the partial order has the property
that descending sequences are finite, then the infimum of the values
over the whole set is equal to the infimum over the set of
\emph{minimal points} for the partial order.  This is valuable when
the set of minimal points has special properties.  In particular, it
is usually possible to index the minimal points by the set of all
integers $\Integers$ so that a pair of adjacent minimal points has
some special property. Such a function defined on set of all integers
$\Integers$ will be called a \emph{chain}.

The integer lattice in $\Reals^2$ is identified with $\Integers^2$ by
giving it a basis.  Certain bases aid in the identification of the
minimal points.  These depend on the expression and have been called
\emph{reduced}.  Dually, the expression giving the function in terms
of a reduced basis has also been called reduced.

Families of related problems lead to a space of reduced bases, and the
study of all reduced bases for a single problem can be expressed in
terms of a dynamical system on this space.  This study will lead to
strong results when the underlying space is \emph{compact}.

Our emphasis here will be \emph{visual} with pictures of the plane
$\Reals^2$ including the lattice $\Integers^2$.  However, while a
basis for the lattice is used in the algebraic description of the
objects in the figure, other considerations may be used in the choice
of \emph{viewing coordinates}.
\section{The Markoff Spectrum}
Traditionally, as in \cite{cusickandflahive89}, the homogeneous
\emph{Markoff Spectrum} is the set of values
\begin{equation}
M(F)=\sup\SET{\frac{\sqrt{D(F)}}{\absval{F(x,y)}}}
{x,y\in\Integers, (x,y)\neq(0,0)}
\label{eq:defM}
\end{equation} 
where $F(x,y)=Ax^2+Bxy+Cy^2$ is an \emph{indefinite} binary quadratic
form of discriminant $D(F)=B^2-4AC$.  The quantity $M(F)$ is given as
a \emph{normalized inverted minimum}: \emph{normalized} to allow
natural comparison between values of different forms $F$;
\emph{inverted} to allow simpler expressions for interesting values in
the spectrum.  Those $F$ with $F(x,y)=0$ for integers $x$ and $y$ (not
both zero), as well as those taking arbitrarily small values, have
$M(F)=\infty$.  The interesting cases are those for which $M(F)$ is
finite.

Since $F$ is indefinite, it can be factored over $\Reals$. We write
\begin{equation}
F(x,y)=(a_0x+b_0y)(a_1x+b_1y),
\label{eq:factform}
\end{equation}
and introduce new variables $\xi=a_0x+b_0y$, $\eta=a_1x+b_1y$ to get
$F=\xi\eta$.  Then, $\sqrt{D(F)}=\absval{a_0b_1-a_1b_0}$.

In particular, the expression $F(x,y)$ is encoded by the
matrix 
\begin{equation}
A=\mattwoc{a_0&b_0\\a_1&b_1\\}.
\label{eq:hommat}
\end{equation}
Left multiplication by this matrix takes the column with components
$(x,y)$ to one with components $(\xi,\eta)$.  Thus, it gives a change
of variables between the arithmetic and geometric aspects of the study
of the values of $F$ on the integer lattice.  The rows of the matrix
are the coefficients in the factors of $F(x,y)$. As a
change-of-variables matrix, its columns give the $(\xi,\eta)$
coordinates of the generators of the lattice.

A change of basis in the lattice multiplies the matrix in
(\ref{eq:hommat}) on the right by an integer matrix of determinant
$\pm1$; scaling the factors of $F(x,y)$ multiplies on the left by a
real diagonal matrix.  The value of $M(F)$ is not changed by these
actions.

A \emph{visual} approach to the Markoff Spectrum must show the integer
lattice and the lines $a_ix+b_iy=0\ (i=0,1)$.  However, it is more
convenient to use $(\xi,\eta)$ as \emph{viewing coordinates} since a
\emph{fixed} $F$ will be studied using \emph{different bases} for the
integer lattice.  In practice, this may be modified by a change of
scale $(\xi,\eta)\to(a\xi,\eta/a)$ in order to bring different lattice
points into focus.  Because of this choice of viewing coordinates, the
lines where $F(x,y)=0$ will be called the \emph{axes} of $F$.  For
example, a picture of $F(x,y)=x^2-3y^2$ on the integer lattice uses
viewing coordinates $(\xi,\eta)$ with
$\xi=x+y\sqrt3,\eta=x-y\sqrt3$. Figure~\ref{fig:hom} shows this view
of $F=0$ (now just the coordinate axes), the lattice generated by
$(x,y)=(1,0)$ and $(x,y)=(0,1)$, and the lattice cell whose $(x,y)$
coordinates are $(0,0),(1,0),(1,1),(0,1)$.
\begin{figure}
 \begin{center}
  \includegraphics[height=3in]{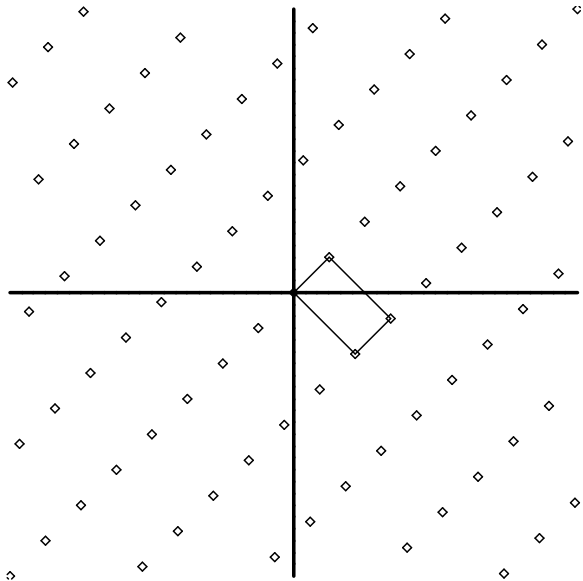}
\end{center}
  \caption{ \label{fig:hom} Homogeneous example}
\end{figure}

In computing $M(F)$, if a lattice point $P_0$ is closer to both axes
than the lattice point $P_1$ is, then $\absval{F}$ is smaller at $P_0$
than at $P_1$. This relation between $P_0$ and $P_1$ is a partial
order of the type mentioned in the Preliminaries.  Thus, only the
\emph{minimal points} for this partial order need be considered when
finding $M(F)$.  Arranging the minimal points in order of their
distance to a specified axis of $F$ gives a \emph{chain} of minimal
points (except when an axis contains a nonzero lattice point).  Since
these results are well known, and have been given in detail in
\cite{bumby1991}, features of this chain are only sketched here.  The
corresponding results for inhomogeneous problems will be described
later in more detail.  Figure~\ref{fig:hom} shows that $(x,y)=(1,0)$
and $(x,y)=(1,1)$ are minimal points, but $(x,y)=(0,1)$ is not since
$(1,0)$ is closer to both axes.

A full description shows that two successive minimal points always
generate the lattice.  These are the \emph{reduced bases} of the
lattice.  In a precise definition of a reduced basis, it is convenient
to fix the order of the axes, the order of the generators of the
lattice, and to choose between a generating vector and its negative.
With one set of choices, if $x$ and $y$ are the coordinates with
respect to a reduced basis, one has $a_0\geq a_1\geq0$ and $b_1\geq
-b_0\geq0$ in (\ref{eq:factform}) and (\ref{eq:hommat}).  Since the
matrix (\ref{eq:hommat}) determines the reduced basis, it is
appropriate to also speak of a \emph{reduced matrix} when these
conditions hold.  The inhomogeneous case will also require matrices
with $b_0\geq0\geq a_0$, but these are avoided in the tradition
treatment of the homogeneous case.

For \emph{three} consecutive minimal points, a matrix whose columns
are the basis consisting of the second and third points is the product
of the corresponding matrix for the first and second points with the
matrix
\begin{equation}
\mattwor{0&1\\1&-a\\}
\label{eq:stepmat}
\end{equation}
with $a=\left\lfloor{a_0/b_0}\right\rfloor$.  To restore
orientation and obtain the required signs of the matrix element, this
must be multiplied on the left by a diagonal matrix whose first
diagonal entry is positive and whose second entry is negative.  In the
example, when the matrix for the reduced basis $(1,1),(1,0)$
multiplied by the matrix in (\ref{eq:stepmat}) and rescaled, leads to
the equation
\begin{equation}
\mattwoc{1+\sqrt3&1\\1-\sqrt3&1\\}\mattwor{0&1\\1&-2\\}=
\mattwoc{-1+\sqrt3&0\\0&-1-\sqrt3\\}
\mattwoc{\frac{1+\sqrt3}{2}&1\\\frac{1-\sqrt3}{2}&1\\}.
\label{eq:exstep}
\end{equation}

The space of all reduced matrices with fixed determinant forms a
compact set.  Note that compactness requires that all inequalities be
\emph{inclusive}.  Classical work often aimed for unique
representations and required some inequalities to be strict, but
sacrificing uniqueness to have a compact space of reduced matrices
allows the Spectrum to be characterized in terms of \emph{attained}
extrema.

The chain of the matrices given by (\ref{eq:stepmat}) is one
description of the steps in the continued fraction algorithm.  It
produces a \emph{symbolic dynamics} that is useful for describing the
relation between the reduced bases of a given form and the computation
of $M(F)$.  In particular, a consistent choice of a vector from each
reduced basis leads to a chain of minimal points $(x_n,y_n)$, and
$M(F)=\sup M_n(F)$ where
\begin{equation}
M_n(F)=\frac{\sqrt{D(F)}}{\absval{F(x_n,y_n)}}.
\label{eq:local}
\end{equation}

Each index $n$ should be associated with both the minimal point
$(x_n,y_n)$ and the reduced basis with this point as first element.  A
sequence of indices can be found for which $M_n(F)\to M(F)$ and also
the corresponding reductions converge (see Lemma~6 of Chapter~1 of
\cite{cusickandflahive89}).  This shows that every value in the
Markoff Spectrum is an \emph{attained} supremum.  This result is known
as the \emph{Compactness Theorem} for the Markoff Spectrum.

A novel variation on this approach, allowing generalization to higher
dimensions, can be found in \cite{sensual}.

The \emph{Divided Cell Algorithm}  transfers these
properties of the continued fraction to inhomogeneous problems.

\section{The Lagrange Spectrum}
If the form $F(x,y)$ in (\ref{eq:factform}) is $x(y-x\alpha)$, then
$F(0,1)=0$, giving $M(F)=\infty$.  However, if $\alpha$ is irrational,
no other \emph{minimal points} $(x,y)$ have $F(x,y)=0$.  If
$0<\alpha<1$, we set $(x_{-1},y_{-1})=(1,0)$ and $(x_0,y_0)=(0,1)$,
giving a reduced basis; and then index the other minimal points by
positive integers.  Properties of rational approximations to $\alpha$
are determined by $L(\alpha)=\limsup M_n(F)$ for $n\in\Naturals$.  The
set of such values is called the \emph{Lagrange Spectrum}.  Theorem~1
of chapter~3 of \cite{cusickandflahive89} says that the Lagrange
Spectrum is a subset of the Markoff Spectrum.  This follows from the
proof of the Compactness Theorem for the Markoff Spectrum.  When
$L(\alpha)$ is finite, the forms appearing in the construction are all
equivalent to $F$, but the limiting form $F^*$ is nonzero on all
lattice points other than the origin and $L(\alpha)=M(F^*)$.

In this paper, we will concentrate on the inhomogeneous Markoff
Spectrum, but applications to the inhomogeneous Lagrange Spectrum will
follow by constructing a convergent sequence of reductions of the
inhomogeneous expression $x(y-x\alpha-\beta)$.

\section{The inhomogeneous Markoff Spectrum}
For \emph{inhomogeneous} problems, the form $F$ defined in
(\ref{eq:factform}) is replaced by
\begin{equation}
F_I(x,y)=(a_0x+b_0y+c_0)(a_1x+b_1y+c_1),
\label{eq:ifactform}
\end{equation}
while we continue to require $(x,y)\in\Integers^2$.  Figures
illustrating such problems will continue to be drawn in \emph{viewing
coordinates} for which $F_I(x,y)=0$ on the axes of the coordinate
system.  The origin of this coordinate system is no longer required to
be a lattice point.  For example, Figure~\ref{fig:inhom} modifies the
example of Figure~\ref{fig:hom} by using factors
$\xi_I=x+y\sqrt3-1-0.5\sqrt3$ and $\eta_I=x-y\sqrt3-1+0.5\sqrt3$ to
study the expression $F_I(x,y)=\xi_I\eta_I$.  Note that the origin is
now at $(x,y)=(1,0.5)$.  The parallelogram in the figure has vertices
whose $(x,y)$ coordinates are $(0,0),(1,1),(2,1)$, and $(0,1)$ with
edges that form a reduced basis.  Earlier work has required that
$(c_0,c_1)$ not be in the lattice generated by $(a_0,a_1)$ and
$(b_0,b_1)$ to explicitly exclude the homogeneous case.  We propose
to allow this case to \emph{exclude itself} because it necessarily has
a lattice point where $F_I(x,y)=0$ and interest will be centered on
those $F_I(x,y)$ that are bounded away from zero on the lattice.
\begin{figure}
\begin{center}
  \includegraphics[height=3in]{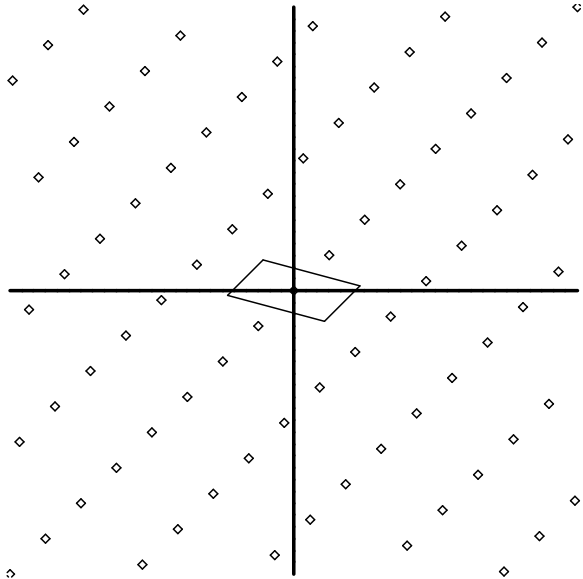}
\end{center}
  \caption{\label{fig:inhom} Inhomogeneous example}
\end{figure}
Notice that the parallelogram in Figure~\ref{fig:inhom} has one vertex
in each quadrant bounded by axes of $F_I$.  This property defines a
\emph{divided cell}: the word ``cell'' refers to a fundamental
parallelogram of the lattice, and it is ``divided'' by having its
vertices separated by the axes.

The definition of the inhomogeneous Markoff value is
\begin{equation}
M_I(F_I)=\sup\SET{\absval{\frac{a_0b_1-a_1b_0}{F_I(x,y)}}}
{x,y\in\Integers}
\label{eq:defMI}
\end{equation} 
using the notation of (\ref{eq:ifactform}).  Note that the origin of
the lattice is not excluded here, since it has no special role in
$F_I$.

To describe an inhomogeneous problem, the matrix of (\ref{eq:hommat})
must be replaced by  
\begin{equation}
B=\matthreec{a_0&b_0&c_0\\a_1&b_1&c_1\\}.
\label{eq:inhommat}
\end{equation}
Left multiplication by this matrix takes the column with components
$(x,y,1)$ to one with components $(\xi_I,\eta_I)$.  Again, the rows of
the matrix are the coefficients in the factors of $F_I(x,y)$.  The
interpretation of columns is a little different from the homogeneous
case: the third column is the image of the origin, and the first two
columns give generators of the lattice.  If the matrix is augmented
with a third row $[0\ 0\ 1]$, one gets the \emph{affine}
change-of-variables matrix relating the column with components
$(x,y,1)$ to one with components $(\xi_I,\eta_I,1)$.  In this matrix,
a column with $0$ in the third position represents a direction; one
with $1$ in the third position represents a point.  The vertices of
the cell are found by adding the sum of a subset of the first two
columns to the third column.  All such columns have $1$ in the third
position, so they can be expected to represent points.

Left multiplication by a two by two diagonal matrix changes the scale
on the axes and right multiplication by an integer matrix with a third
row $[0\ 0\ 1]$ and determinant $\pm1$ gives an affine change of basis
in the lattice.

\section{Divided cells as reduced objects}
We continue the convention of using the $(x,y)$ for coordinates in the
integer lattice, but describing geometric properties using
$(\xi_I,\eta_I)$ as viewing coordinates.

Divided cells will be the reduced objects for the study of $F_I(x,y)$
on the integer lattice.  Several proofs of the existence of divided
cells have been given, beginning with Delone \cite{del47} in 1947. Our
proof uses work of Pitman \cite{pitman58}, and will be given after
discussing the role of divided cells in Diophantine Approximation.

We choose the line $a_0x+b_0y+c_0=0$ to be the vertical axis in
Figure~\ref{fig:inhom} with the positive halfspace on the right.
Treating the vertices of the cell as the basic fundamental
parallelogram of $\Integers^2$ with $(0,0)$ in the lower left quadrant
of the figure, gives $c_0\leq0, b_0+c_0\leq0, a_0+c_0\geq0,
a_0+b_0+c_0\geq0$.  These inequalities imply $a_0\geq\absval{b_0}$.  A
similar analysis of $a_1x+b_1y+c_1=0$ as the horizontal line leads to
$b_1\geq\absval{a_1}$.  Conversely, these conditions on
$a_0,a_1,b_0,b_1$ give a nonempty set of possible solutions for
$c_0,c_1$.  Since $M_I(F_I)$ is invariant under scaling of the linear
factors of $F_I(x,y)$, one may introduce a convenient scaling, like
$a_0=b_1=1$.  Then $a_1$ and $b_0$ are each chosen from the interval
$[-1,1]$, and then each of $c_0$ and $c_1$ is chosen from an
appropriate closed intervals.  In this way, the space of divided cells
can be represented by the fourth power of a closed interval.  This
scaling will not be used in this paper, but we will insist that
$a_0>0$ and $b_1>0$, forcing the \emph{base vertex} to be in the third
quadrant.

This construction shows that the specification of a divided cell can
be done in two steps: first choose generators of the lattice giving
the directions of the sides of the cell; then locate the origin.
Barnes \cite{BarIV} introduced the term ``I-reduced'' for the lattice
bases arising in this way.  We keep the name, but take it to mean that
$a_0\geq\absval{a_1}$ and $b_1\geq\absval{b_0}$.

If $a_1b_0\leq0$, the cells are essentially the reduced cells of the
homogeneous case.  Such cells will be called \emph{Gaussian}, or
G-cells, indicating that they are reduced in the sense of Gauss.  Note
that, in contrast to the homogeneous case, no attempt is made to fix
the sign of $a_1$.

The cells that are \emph{not} Gaussian will be called N-cells. Since
definitions should use \emph{inclusive} inequalities, the correct
characterization of an N-cell is $a_1b_0\geq0$.  This allows
a cell to be both a G-cell and an N-cell, but only when one of its
sides is parallel to an axis.

If a parallelogram is an I-reduced cell, then the possible locations
of the origin in the cell form a rectangle inside the cell.  This
rectangle is called the \emph{inner box} (which we will sometimes call
simply a ``box'') of the I-reduced cell.  Figure~\ref{fig:cellbox}
shows two typical examples.  In the figure, a G-cell is on the left
and an N-cell is on the right. The width of the box is
$a_0-\absval{a_1}$, so that it degenerates to a vertical line segment
if $a_0=\absval{a_1}$.  Similarly, the inner box degenerates to a
horizontal line segment if $b_1=\absval{b_0}$.  When both
$a_0=\absval{a_1}$ and $b_1=\absval{b_0}$, the box is only a single
point.  This illustrates that the first row, which gives the
coefficients in the equation of the axis shown in the vertical
position and describes the first
coordinates of the cell, governs the divided cell step.

\begin{figure}
 \hbox{\includegraphics[height=2in]{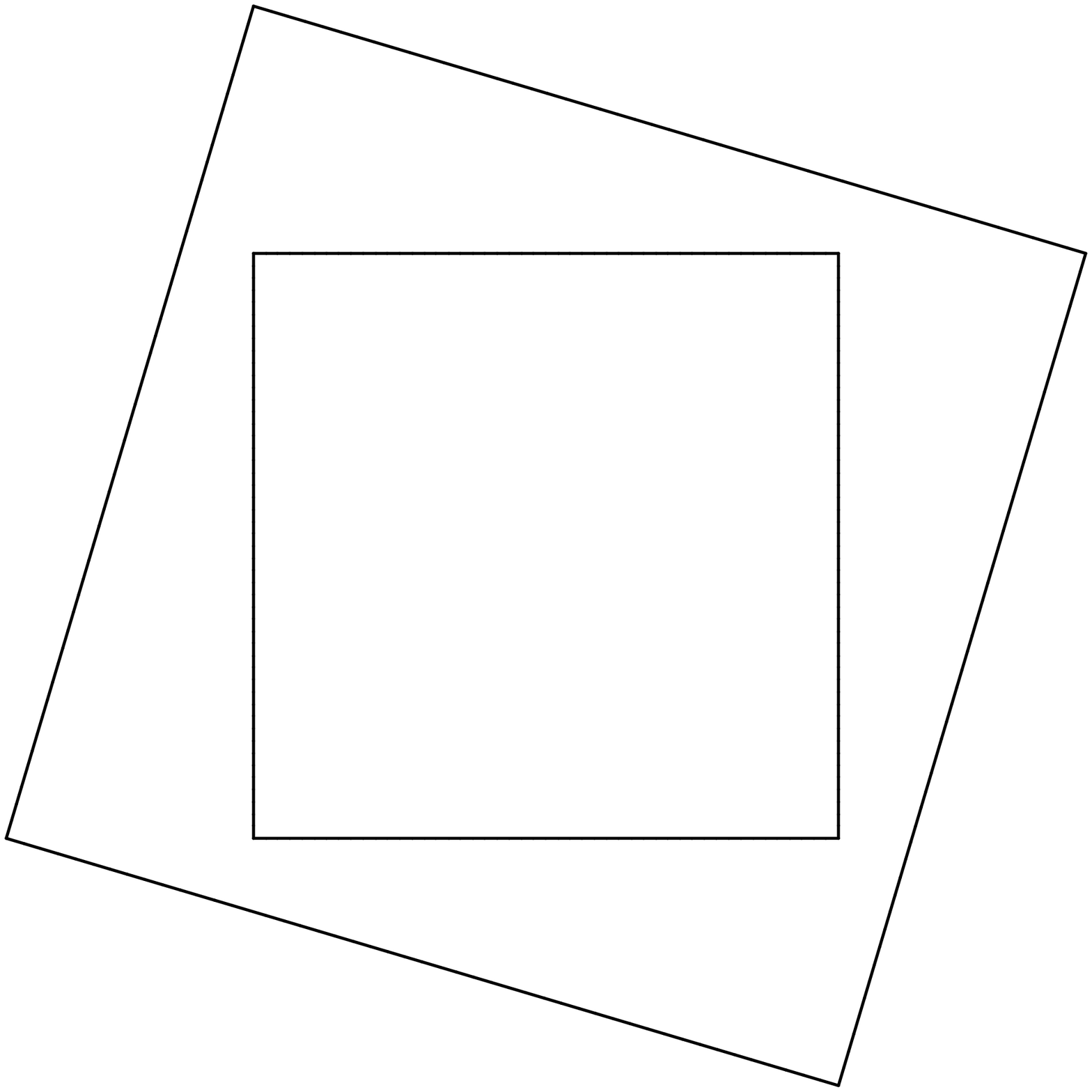}\hskip1in
             \includegraphics[height=2in]{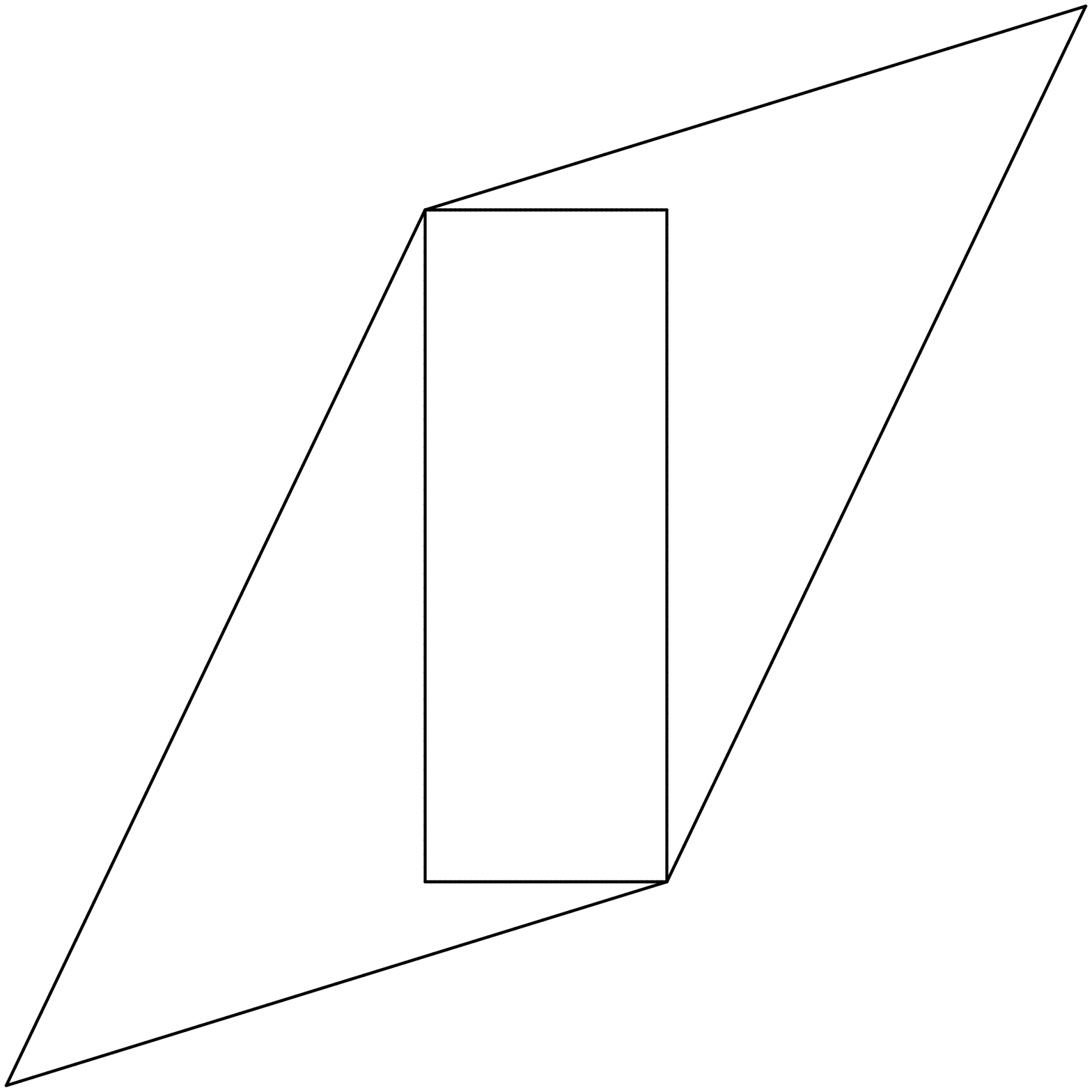}}
\caption{\label{fig:cellbox} Cells and Boxes}
\end{figure}

As in the homogeneous case, the partial order defining minimal points
considers distances to both axes.  However, this time it is necessary
to treat each quadrant separately.  Within a fixed quadrant, points
that are closer to both axes will be smaller points in the partial
order.  We call this the \emph{basic partial order}.
It will need to be modified, but this is a good
tentative definition.

\section{The divided cell algorithm}
Once one divided cell is available, it is possible to construct a
\emph{chain} of divided cells containing that cell. This construction
is the \emph{Divided Cell Algorithm}.  It must be shown that
$M_I(F_I)$ can be computed using only the vertices of the cells
obtained by this algorithm.  This is essentially the content of
Theorem~5 of \cite{BarS-DIII}.  Another approach to using divided
cells to compute $M_I(F_I)$ is given by the theorem on page 530 of
\cite{del47}.  Our proof will distinguish \emph{six} related chains
arising from the divided cell algorithm: a chain of cells, a chain of
boxes, and four chains of minimal points.  The relations among these
chains is not as direct as it is in the homogeneous case, so it is
useful to keep them separate while showing how they are related.  The
chains of minimal points --- one chain in each quadrant --- play a key
role in showing that all divided cells lie in a single chain and are
used to characterize the quantity $M_I(F_I)$ defined in
(\ref{eq:defMI}). Since the chains in different quadrants are
independent, distances in different quadrants may be weighted
differently.  We don't explore that here, but some consequences can be
found in Section~3 of \cite{BarS-DIII}.  Finally, the chain of boxes
shows the simplest progression from one axis to the other. All of
these chains terminate if there is a lattice direction parallel
to an axis, but our statements will make no effort to distinguish that
case.

The construction of the Divided Cell Algorithm is used in two
settings: given expression $F_I$, it produces its chain of divided
cells; given only an I-reduced basis for a lattice, it describes all
possible successor I-reduced bases.
\begin{thm}\label{thm:chain}
If $F_I$ admits one divided cell, then there is a chain of divided
cells containing that cell.  Given an I-reduced basis, there is one
shape of an N-cell arising as a successor and one possible
shape of a G-cell arising as a successor. The N-cell always occurs,
but the G-cell may not: the number of positions of the N-cell is
always one more than the number of positions of the G-cell.
\end{thm}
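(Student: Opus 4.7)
The plan is to mirror the homogeneous continued fraction step illustrated in (\ref{eq:exstep}), lifting it to inhomogeneous I-reduced bases and then to divided cells via the inner box. Given an I-reduced $B$ as in (\ref{eq:inhommat}), with $a_0 \geq |a_1|$ and $b_1 \geq |b_0|$, I would define the successor step by right-multiplying by the $3 \times 3$ analog of the step matrix in (\ref{eq:stepmat})---an integer matrix built from $\mattwor{0&1\\1&-k\\}$ with a third row and column reserved for the origin---and then left-multiplying by a diagonal matrix to restore the sign conventions of I-reducedness. The shear parameter $k \in \Integers$ is constrained by requiring the result to be again I-reduced.

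To classify the valid $k$, I would write out the new inequalities $a_0' \geq |a_1'|$ and $b_1' \geq |b_0'|$ in terms of the old entries and $k$. This confines $k$ to at most two consecutive integers $k_N$ and $k_G$. A direct sign computation of $a_1' b_0'$ for each choice shows that one value yields an N-cell shape and the other a G-cell shape; the two shapes coincide precisely when one side of the cell is parallel to an axis, which is the degenerate case where a cell counts as both. This produces exactly one shape of each type as a candidate successor.

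To promote shapes to positions, I would track the origin via the inner box. Pitman's identification of the divided cell algorithm with the regular continued fraction gives a direct comparison between the inner box of the original cell and those of the two candidate successors, using the widths $a_0 - |a_1|$ and $b_1 - |b_0|$ from Figure~\ref{fig:cellbox}. This comparison shows that the origin, which lies in the original inner box, is automatically covered---possibly after a lattice translate of the candidate cell---by the inner box of the $k_N$-successor, yielding the existence assertion for the N-cell; the $k_G$-successor captures the origin only when an additional inequality holds. Enumerating the integer translates of each candidate shape whose inner boxes meet the strip adjacent to the original cell, and using that $k_N$ and $k_G$ differ by one, yields the asserted position count. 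Iterating the step forward and backward then produces the chain containing the given divided cell, terminating only when an axis contains a lattice direction.

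The main obstacle is the second stage: controlling signs through the shear and the sign-restoring scaling to pin down which of $k_N$, $k_G$ produces each shape, and correctly treating the boundary case where the two admissible values collapse. The position counting of the third stage is the next most delicate point, since it requires the translates of the N- and G-shapes to interlock with exactly the asserted off-by-one count and no accidental coincidences; Pitman's correspondence with the regular continued fraction is what I would lean on to make this bookkeeping transparent.
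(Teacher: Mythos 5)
Your overall strategy---lift the continued-fraction step matrix (\ref{eq:stepmat}) to a $3\times3$ matrix, pin the shape parameter to two consecutive integers by demanding I-reducedness, and then count positions by comparing inner boxes---is the same skeleton the paper uses (the paper's shape parameter is $h+k$, extracted geometrically by extending the left and right sides of the cell until they cross the vertical axis, with $h$ and $k$ separately recording position). However, two of your key steps are not right as stated. First, your claim that the origin ``is automatically covered \dots by the inner box of the $k_N$-successor'' is false: the inner boxes of the distinct successor positions intersect only in vertical segments and together \emph{tile} the inner box of the original cell, so an origin lying in the interior of a G-successor's box lies in \emph{no} N-successor's box. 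The assertion ``the N-cell always occurs'' is a statement about the set of successor positions attached to the I-reduced basis, not about every individual origin; the paper establishes it by showing that the two \emph{extreme} positions (leftmost, with $h=1$ and $k$ maximal, and rightmost) are always N-cells.

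Second, the off-by-one count is exactly the content you defer to ``bookkeeping'' via Pitman's correspondence, and that deferral hides the whole argument. The count follows from the geometric fact that, as one slides across the original inner box, the successor boxes abut and alternate N, G, N, G, \dots, N --- obtained in the paper by decreasing $k$ by $1$ (N to adjacent G) and then increasing $h$ by $1$ (G to a translate of the N-shape), starting and ending with an N-cell. The mere fact that the two admissible shape parameters differ by one does not yield this interleaving. Moreover, Pitman's theorem as used in this paper is a different statement (a Gaussian cell and its two N-neighbors have inner boxes forming a fundamental domain), proved later and used for the \emph{existence} of divided cells, not for this successor count; leaning on it here risks circularity and in any case does not supply the alternation you need.
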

\begin{proof} 
Suppose that we are given a divided cell defined by a matrix as in
(\ref{eq:inhommat}).  Since this is a divided cell,  
$a_0\geq\absval{a_1}$ and $b_1\geq\absval{b_0}$, with additional
bounds on each $c_i$ in terms of $a_i$ and $b_i$.  
The details of the divided cell step depend on the sign of $b_0$.
The algorithm terminates if $b_0=0$, and there are only minor
differences between the other cases, so only the case of $b_0>0$ will
be illustrated.  

The definition of a divided cell then gives that $c_0<c_0+b_0\leq0\leq
c_0+a_0$, so that the line segment from $(c_0,c_1)$ to
$(c_0+b_0,c_1+b_1)$ forms the left side of the cell and crosses the
horizontal axis.  This side can be extended until it crosses the
vertical axis, giving an integer $h>0$ with $c_0+hb_0\leq0\leq
c_0+(h+1)b_0$.  The segment from $T_-\colon(c_0+hb_0,c_1+hb_1)$ to
$T_+\colon(c_0+hb_0+b_0,c_1+hb_1+b_1)$ will form the \emph{top} of the
next cell.  Similarly, the bottom of the next cell is found by
extending the right side to get a segment from
$B_-\colon(c_0+a_0-kb_0,c_1+a_1-kb_1)$ to
$B_+\colon(c_0+a_0+b_0-kb_0,c_1+a_1b_1-kb_1)$ for some $k>0$.  Since
segments $T_-T_+$ and $B_-B_+$ both cross the vertical axis, it
follows that $(h+k-1)b_0\leq a_0\leq(h+k+1)b_0$.  This analysis shows
that right multiplication by
\begin{equation}
S=\matthreec{0&-1&1\\1&h+k&-k\\0&0&1\\}(b_0>0)
\textbox{ or }S=\matthreec{0&1&0\\-1&h+k&1-h\\0&0&1\\}(b_0<0)
\label{eq:transmat}
\end{equation} 
gives the matrix representing the next cell.
In each case, $h$ and $k$ are positive integers with
$(h+k-1)\absval{b_0}\leq a_0\leq(h+k+1)\absval{b_0}$. If $a_0/b_0$ is
not an integer, $h+k$ must be one of the two integers nearest
to $\absval{a_0/b_0}$.  

The \emph{shape} of the successor cell is determined by $h+k$; and the
\emph{position} by $h$.  Those with different $h$ and the same $h+k$
are translates of one another The leftmost possible cell is an N-cell
with $h=1$ and $k$ as large as possible.  If $\absval{a_0/b_0}<2$,
this is the only successor.  Otherwise, the rules for determining the
inner box show that decreasing $k$ by $1$ and keeping $h$ fixed gives
a G-cell whose inner box abuts the box of this leftmost cell.  Then,
keeping this $k$ and increasing $h$ by $1$ gives a \emph{translate} of
the leftmost N-cell whose inner box abuts the box of this G-cell.  The
rightmost box will be an N-cell, and the the union of the inner boxes
of these possible successors covers the inner box of the original
cell.
\end{proof} 

Note that the first row of (\ref{eq:inhommat}), which gives the
coefficients in the equation of the axis shown in the vertical
position and describes the first coordinates of the cell, governs the
divided cell step.

The first part of Theorem~\ref{thm:chain} is illustrated in
Figure~\ref{fig:cellsboxes} showing a divided cell with its box and,
in two separate graphs, two successor cells with their boxes.  In this
picture, the original cell is a G-cell, and both types of successor
are shown with the G-cell on the left (note that this figure contains
G-cells with different signs of $a_1$).  Several lattice points are
also included.
\begin{figure}
 \hbox{\includegraphics[height=2in]{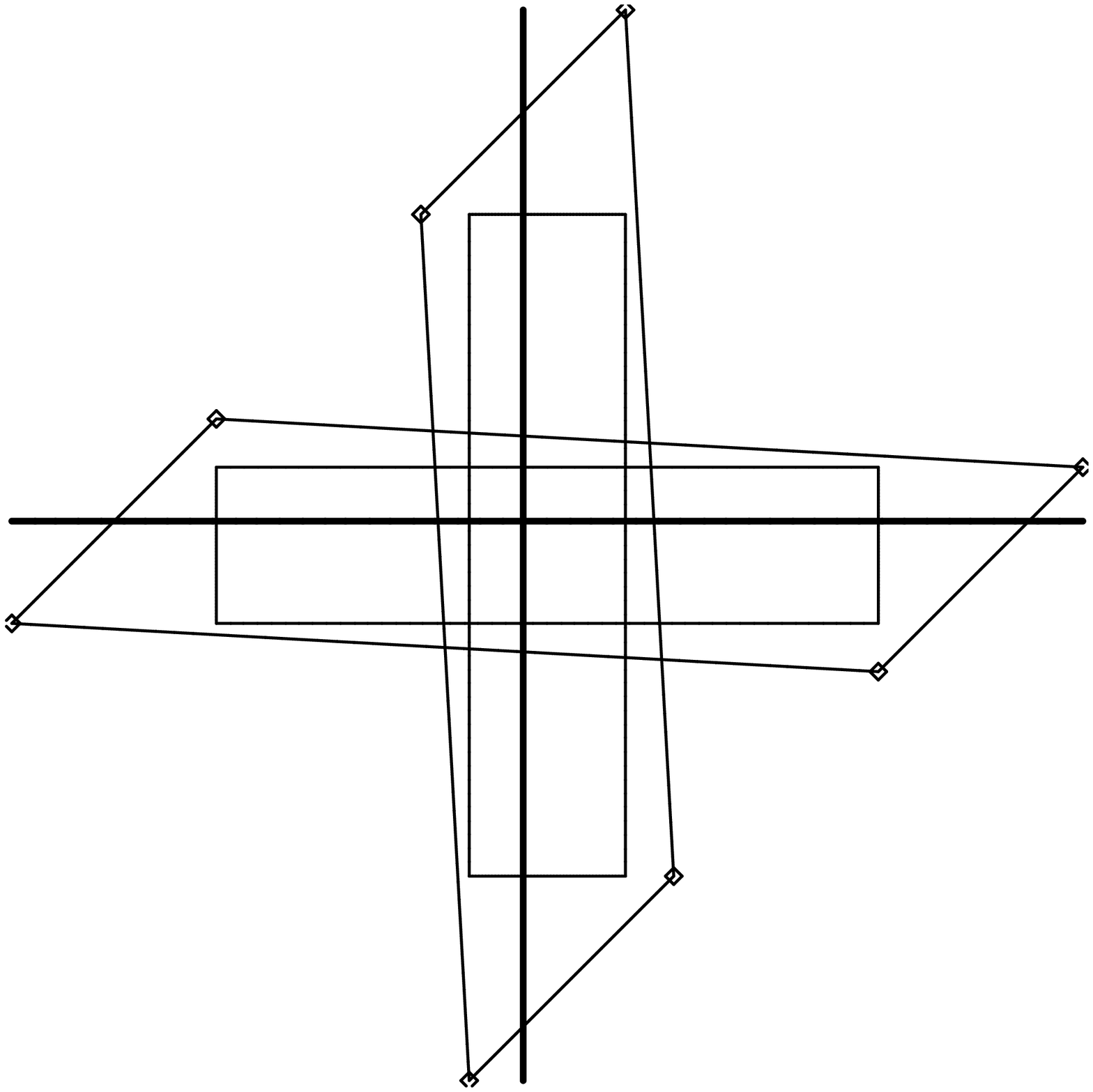} \hskip1in
             \includegraphics[height=2in]{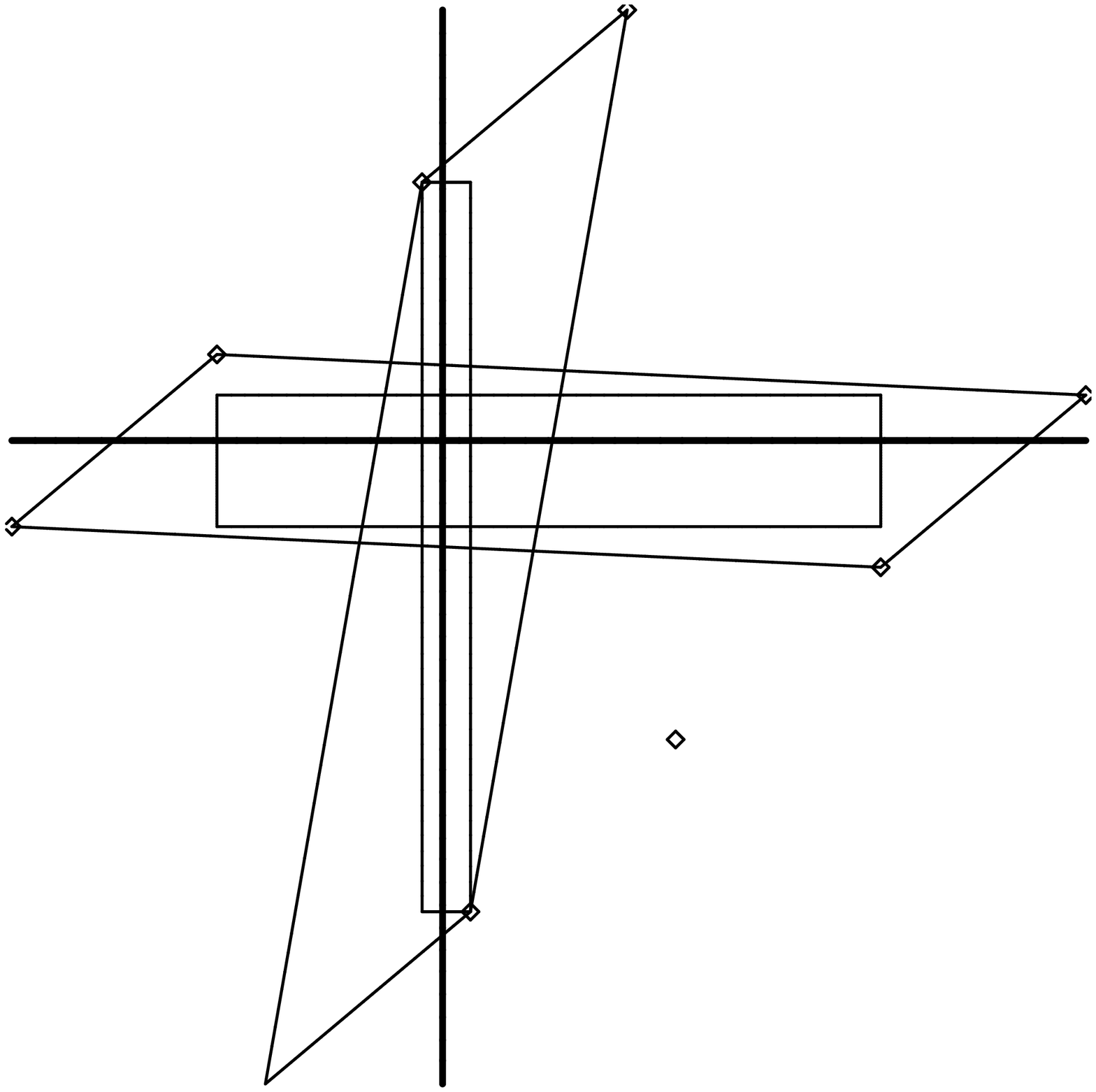}}
\caption{\label{fig:cellsboxes} Successor Cells and Boxes}
\end{figure}

Figure~\ref{fig:cellsboxes} may also be used to analyze the chain of
divided cell vertices in each quadrant.  In the pictures, the common
lattice direction of a cell and its successor gives a line joining the
vertices of those cells in the second quadrant, and also in the fourth
quadrant.  Moreover, these two lines are adjacent lattice lines in
that direction, so that there are no lattice points interior to the
strip bounded by those lines.  However, in the fourth quadrant of the
second picture there is a point on one of these
lines that is not a vertex of a divided cell although it meets our
preliminary requirement for being a minimal point.  We will now
resolve this difficulty.  We state the theorem for the first quadrant
in order to have names for the edges that we use, and concentrate on
points with small first coordinate  but the proof is
readily applied to both coordinates in all quadrants.

\begin{thm}\label{thm:linord}Given a divided cell $C$, let $I$ be the
  projection of the open top edge of $C$ on the horizontal axis. For
  each lattice line $L$ parallel to this edge, let $I_L$ be the points
  on $L$ whose projection on the horizontal axis lies in $I$. Then
  each $I_L$ contains at most one lattice point, and only those above
  the top edge of $C$ contain such a lattice point in the first
  quadrant. Furthermore, the projections onto the vertical axis of the
  $I_L$ are disjoint, so the ordering of these points by their second
  coordinate is the same as the order on the line $L$ containing the
  point.  
\end{thm}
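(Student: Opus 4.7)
The plan is to parametrize everything in terms of the I-reduced basis defining $C$ and the lattice lines parallel to the top edge, then carry out the counting and ordering claims by elementary inequalities, invoking the full strength of the divided-cell hypothesis where needed.

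Writing the columns of $B$ as $\mathbf{a}=(a_0,a_1)$, $\mathbf{b}=(b_0,b_1)$, $c=(c_0,c_1)$, the top edge of $C$ is the open segment $\{c+\mathbf{b}+t\mathbf{a}:t\in(0,1)\}$, so $I=(c_0+b_0,\,c_0+a_0+b_0)$.  The parallel lattice lines are $L_n=\{c+t\mathbf{a}+n\mathbf{b}:t\in\Reals\}$ for $n\in\Integers$, with lattice points at integer $t=m$. The condition that the first coordinate lie in $I$ places $m$ in the open real interval of length $1$ centered appropriately, so at most one integer satisfies it, proving the ``at most one lattice point'' assertion. For $n=1$ the interval is exactly $(0,1)$, contributing no lattice point since the two excluded endpoints are just the vertices of $C$ bounding the top edge.

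For the first-quadrant claim I would argue directly that for $n\le 0$ the unique lattice point $P_n$ of $I_{L_n}$ (when it exists) has $\eta(P_n)\le 0$. The case $n=0$ is immediate because the integer $t$ is $0$ or $1$ according to the sign of $b_0$, producing one of $c$ and $c+\mathbf{a}$, the third- and fourth-quadrant vertices of $C$. For $n<0$ I would telescope downward: writing $P_n=c+m(n)\mathbf{a}+n\mathbf{b}$,
\begin{equation*}
\eta(P_{n-1})-\eta(P_n)=\bigl(m(n-1)-m(n)\bigr)a_1-b_1,
\end{equation*}
where $m(n-1)-m(n)\in\{0,1\}$ if $b_0>0$ and $\in\{0,-1\}$ if $b_0<0$, since the length-$1$ interval for $m$ shifts by $b_0/a_0$ per step. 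The essential input is the stronger reduction bound $b_1\ge\absval{a_1}$, which holds in every divided cell (beyond what I-reducedness alone guarantees) because the separation of the four vertices into the four quadrants forces $c_1$ to lie in the interval $[\max(-b_1,-a_1-b_1),\min(0,-a_1)]$, whose non-emptiness is equivalent to $b_1\ge\absval{a_1}$. With this inequality every difference above is non-positive, so $\eta(P_n)\le\eta(P_0)\le 0$ as needed.

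The same telescoping run upward gives
\begin{equation*}
\eta(P_{n+1})-\eta(P_n)\ge b_1-\absval{a_1}\ge 0,
\end{equation*}
strictly positive whenever the inner box has non-degenerate height. Hence the $\eta$-coordinates of the lattice points on successive $I_L$'s above the top edge are strictly monotonic in $n$, which matches the two orderings in the final sentence. I would make the disjointness content explicit by observing that $L_n$ crosses the vertical axis at $\eta=(c_1-c_0a_1/a_0)+n(a_0b_1-a_1b_0)/a_0$, an arithmetic progression with positive common difference $(a_0b_1-a_1b_0)/a_0$ (the positive lattice determinant divided by $a_0>0$); collapsing each $I_{L_n}$ along its common direction $\mathbf{a}$ onto the vertical axis produces precisely these distinct and strictly ordered points.

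The main obstacle is the careful case analysis controlling how $m(n)$ jumps as $n$ varies and the verification that the divided-cell inequality $b_1\ge\absval{a_1}$ is genuinely available, since the I-reduced conditions $a_0\ge\absval{a_1}$ and $b_1\ge\absval{b_0}$ alone are insufficient; the degenerate boundary cases where $a_0=\absval{a_1}$ or $b_1=\absval{b_0}$ and the inner box collapses require separate but routine handling. Once this bookkeeping is in place the proof is entirely linear-algebraic.
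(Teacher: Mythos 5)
Your argument is correct in substance and reaches all of the theorem's conclusions, but its second half takes a genuinely different route from the paper. On the counting claim the two proofs coincide: $I$ has width $a_0$, which is exactly the spacing of lattice points along each $L$, so the open condition admits at most one integer parameter. For the quadrant restriction and the ordering, the paper argues at the level of the whole segments $I_L$: consecutive ones are translates by a fixed vector, the relation between their vertical projections is claimed to be that of the top and bottom edges of $C$, and since $C$ is divided those edges lie on opposite sides of the horizontal axis, giving disjointness and hence the ordering. You instead track only the single lattice point $P_n$ on each line and telescope its second coordinate, showing each consecutive difference is $b_1$ or $b_1\mp a_1$ and hence nonnegative by the divided-cell inequality $b_1\ge\absval{a_1}$ (which you rightly observe must come from the cell being divided, not from I-reducedness as the paper defines it). Your route is in fact the more robust one: consecutive $I_L$ are translates by $\mathbf{b}-(b_0/a_0)\mathbf{a}$, whose vertical component $(a_0b_1-a_1b_0)/a_0$ can be smaller than the vertical extent $\absval{a_1}$ of a single segment --- for instance $\mathbf{a}=(1,0.9)$, $\mathbf{b}=(0.9,1)$, $c=(-0.95,-0.95)$ is a divided cell in which the vertical projections of consecutive $I_L$ overlap --- so the literal disjointness of the segment projections can fail even though the lattice points they contain remain correctly ordered, which is all that the rest of the paper uses. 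Two small blemishes: the quantity controlling strictness in your telescoping is $b_1-\absval{a_1}$, which is not the inner-box height $b_1-\absval{b_0}$; and replacing the vertical projection by the oblique projection along $\mathbf{a}$ trivializes the disjointness sentence rather than proving what was intended, so it would be cleaner to say explicitly that you are ordering the points rather than the segments.
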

\begin{proof} 
For a lattice line $L$, the distance between consecutive lattice
points on $L$ is fixed, and the top edge of $C$ gives one example of
such a pair of consecutive lattice points.  Again, since the lines are
parallel, the difference of first coordinates is also fixed and equal
to the width of $I$ in this case.  Hence, except when the endpoints of
$I_L$ are lattice points, there is a unique lattice point in each
$I_L$.

Similarly, the relation between projections on the vertical axis of
two consecutive $I_L$ is also fixed, so it will be the same as the
relation between the projections of the top and bottom edges of $C$.
However, $C$ is a divided cell, so all points on the top edge have
positive second coordinate and all points on the bottom edge have
negative second coordinate, so the projections of these edges are
disjoint.  For $L$ below the top edge of $C$, all points of $I_L$ have
negative second coordinate, so $I_L$ contains no point in the first
quadrant.
\end{proof} 

Any point in the first quadrant that is closer to the vertical axis
than the vertex $P$ of $C$ in that quadrant must project into $I$, but
Theorem~\ref{thm:linord} shows that all lattice points with that
property have larger second coordinate than $P$.  Hence $P$ is a
minimal point.  

When a side of $C$ is extended to meet the positive vertical axis, one
obtains a lattice line with one lattice point on each line parallel to
the top edge of $C$.  When the left side of the $C$ is used in this
construction, the first description of the divided cell step shows
that the first lattice point in the first quadrant is a vertex of the
successor divided cell.  If it is the extension of the right side of
$C$ that meets the positive vertical axis, the first several lattice
points will be in the first quadrant, but only the first and last of
these are vertices of divided cells.  This bypassing of minimal points
in the divided cell algorithm is easily accommodated by augmenting the
basic partial order.

\begin{thm}\label{thm:convex}
 If a line meets a quadrant in a bounded interval, the product of the
 distances to the axes is zero at the endpoints of the interval and
 has a unique interior maximum.  The distance decreases as one moves
 from the location of the maximum towards either axis. 
\end{thm}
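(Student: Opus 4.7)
The plan is to reduce the claim to a statement about a quadratic polynomial by restricting $|\xi_I\eta_I|$ to a linear parametrization of the line. Write the line as $t\mapsto(\xi_I(t),\eta_I(t))=(\alpha+at,\beta+bt)$ for some direction vector $(a,b)$. If either $a$ or $b$ were zero, the line would be parallel to an axis of $F_I$, and its intersection with any open quadrant would be either empty or unbounded; the hypothesis therefore forces $ab\neq 0$.

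Next I would identify the intersection with the quadrant explicitly. Inside a fixed quadrant the signs of $\xi_I$ and $\eta_I$ are constant, so the line exits the quadrant precisely where one of the two linear factors changes sign, giving candidate endpoints $t_1=-\alpha/a$ and $t_2=-\beta/b$. These are distinct (otherwise the line would pass through the origin of the $(\xi_I,\eta_I)$-plane and would meet the quadrant only at a point, not in a bounded \emph{interval}), and after relabeling we may assume $t_1<t_2$. On $[t_1,t_2]$ the two factors have fixed signs with common product sign, so the product of the distances to the axes reduces to
\begin{equation*}
D(t)\;=\;|\xi_I(t)\,\eta_I(t)|\;=\;|ab|\,(t-t_1)(t_2-t).
\end{equation*}

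From here the conclusions fall out immediately. $D$ is a strictly concave quadratic on $[t_1,t_2]$, vanishing at the two endpoints and attaining a unique maximum at the midpoint $t^{*}=(t_1+t_2)/2$; it is strictly increasing on $[t_1,t^{*}]$ and strictly decreasing on $[t^{*},t_2]$. Since moving along the line from $t^{*}$ toward either axis of $F_I$ corresponds exactly to moving toward one of the endpoints $t_1$ or $t_2$, strict monotonicity yields the decrease claimed.

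The only subtlety, and hence the ``main obstacle,'' is verifying that the boundedness hypothesis really does rule out the degenerate cases $a=0$, $b=0$, and $t_1=t_2$. Once these are excluded, the argument is elementary calculus on a quadratic, and no further machinery is needed.
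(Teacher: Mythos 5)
Your proof is correct and is exactly the argument the paper has in mind: its entire proof reads ``a calculus exercise --- expressed in terms of one of the coordinates the distance is a quadratic polynomial with negative coefficient of the second degree term,'' and you have simply written that quadratic out explicitly via a linear parametrization and checked the degenerate cases. Nothing further is needed.
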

\begin{proof} 
A calculus exercise!  When expressed in terms of one of the
coordinates the distance is a quadratic polynomial with negative
coefficient of the second degree term.
\end{proof} 

When the line in Theorem~\ref{thm:convex} is a lattice line, this says
that we may modify the basic partial order to also say that a lattice
point is greater than another lattice point on the line that is on the
same side of the point of maximum value of $F_I$ and farther from that
point.  With this modification, the only minimal points on the line in
this quadrant are the vertices of the original divided cell and its
successor.

Augmenting the basic partial order in this way on \emph{every} lattice
  line and forming the transitive closure gives a new partial order
  called the \emph{extended partial order} with fewer minimal points.
  We will say the $P$ is \emph{nearer} that $Q$ if $P\leq Q$ in the
  extended partial order.

\begin{thm}\label{thm:minpt} If divided cells exist, every minimal
  lattice point for the extended partial order is a vertex of a
  divided cell.
\end{thm}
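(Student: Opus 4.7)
The plan is to prove the contrapositive: show that if a lattice point $P$ is not a vertex of any divided cell, then $P$ is not minimal for the extended partial order. By the symmetry of the argument across the four quadrants determined by the axes of $F_I$, it suffices to treat $P$ in the first quadrant.

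First I would use the chain of divided cells produced by Theorem~\ref{thm:chain} to locate $P$ geometrically. Traversing the chain, one finds a cell $C$ whose top-edge horizontal projection $I$ contains $P_x$. Such a cell exists because each top edge spans from its second-quadrant vertex to its first-quadrant vertex, so its horizontal projection always straddles $x=0$, while the $x$-coordinate of the first-quadrant vertex sweeps monotonically down toward zero as the chain advances forward (visible from the identity $c_0 + (h+1)b_0 \leq c_0 + a_0 + b_0$ derived in the proof of Theorem~\ref{thm:chain}) and correspondingly up in the reverse direction. With such a $C$ chosen, Theorem~\ref{thm:linord} places $P$ as the unique lattice point of $I_L$ for a specific lattice line $L$ parallel to, and strictly above, the top edge of $C$.

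Second, I would exhibit a divided cell vertex $Q$ that is strictly less than $P$ in the extended partial order. The cleanest subcase is when $L$ is the lattice line obtained by extending a side of some cell in the chain to meet the positive vertical axis---note that the sides of one cell share a direction with the top edges of its neighbors, so such realizations arise naturally when $L$ is pinned down by Step~1. Here $P$ is one of the intermediate bypassed lattice points on the extension, and Theorem~\ref{thm:convex} together with the line-based augmentation (described in the paragraph following that theorem) places a divided cell vertex on $L$ strictly nearer the segment's maximum-distance point than $P$, so $P$ is strictly greater than that vertex in the extended order. The complementary subcase, when $L$ is a parallel translate not realized as a side extension at the particular $C$ of Step~1, is handled by stepping further through the chain to find another cell for which $L$ is a side extension, then repeating the argument.

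The main obstacle I anticipate is the complementary subcase: demonstrating that every lattice line $L$ through a non-vertex $P$ is eventually realized as a side extension of some cell in the chain. This is a density-style statement reflecting that the divided cell algorithm plays the role of a two-dimensional continued fraction expansion and so enumerates all sufficiently refined primitive lattice directions as side directions. The detailed coordinate bookkeeping interleaving consecutive cells---including the distinction between the $b_0 > 0$ and $b_0 < 0$ algorithm branches of Theorem~\ref{thm:chain}---is routine once this density-style fact is established.
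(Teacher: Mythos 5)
There is a genuine gap, and it is the one you flag at the end but then dismiss as ``routine'': the complementary subcase does not reduce to a density statement, and the density statement you would need is not true in the form required. Once you have fixed a cell $C$ and placed $P$ in $I_L$ via Theorem~\ref{thm:linord}, the line $L$ has the \emph{fixed} direction of the top edge of $C$. The lattice lines in that direction which arise as side extensions of cells in the chain are only the finitely many sides of $C$ and of the adjacent cells sharing that edge direction; a line $L$ several lattice steps above the top edge is in general never a side extension of any cell, because as the chain advances the edge directions change (this is exactly the continued-fraction behaviour). So ``stepping further through the chain to find another cell for which $L$ is a side extension'' cannot work, and your Step~2 only disposes of the points on the one or two lines adjacent to the top edge of $C$.

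The paper closes this gap by induction along the chain rather than by a single reduction: by Theorem~\ref{thm:linord} there are only finitely many minimal points whose second coordinate lies between that of the known divided-cell vertex and that of $P$; the discussion following Theorem~\ref{thm:convex} shows that the \emph{first} of these (the one on the extension of a side of $C$) is the vertex of the successor cell; one then re-bases at that successor cell, whose top edge has a new direction, and repeats. The count of intervening minimal points strictly decreases, so the induction terminates with $P$ itself a vertex. Your contrapositive framing is fine in principle, and your Step~1 (locating $P$ in some $I_L$) matches the paper's use of Theorem~\ref{thm:linord}; what is missing is precisely this re-basing step, which replaces the false ``every such $L$ is eventually a side extension'' by ``the adjacent $L$ is a side extension, and the problem for the remaining lines recurs with a smaller count relative to the successor cell.''
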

\begin{proof} This is now little more than using a known divided cell
  as the basis and using previous results of this section for an
  induction step.  By symmetry, it suffices to show the result for
   a minimal lattice point $P$ in the first quadrant that is closer to the
  vertical axis.  By Theorem~\ref{thm:linord} there are finitely many
  minimal lattice points whose second coordinate lies between that of
  $P$ and the original divided cell vertex in this quadrant.  By the discussion
  following Theorem~\ref{thm:convex}, the first of these is the vertex of
  a divided cell.  There are fewer minimal lattice points in the first
  quadrant between this cell and the selected point, allowing
  induction to work.

\end{proof} 

We illustrate the second part of Theorem~\ref{thm:chain} with
Figure~\ref{fig:allboxes} showing the inner boxes of the possible
successors.  To draw both Figure~\ref{fig:cellsboxes} and
Figure~\ref{fig:allboxes}, we used $a_0/a_1=2+\sqrt5\approx4.236$. The
two parts of Figure~\ref{fig:allboxes} show the number of each type of
cell predicted by Theorem~\ref{thm:chain}. To avoid clutter, the cells
are not shown in Figure~\ref{fig:allboxes}, but the vertices are.
Note that lattice points appear as vertices of the inner box of an
N-cell, but the inner box of a G-cell is strictly interior to the cell
and contains no lattice point.  The cells that are collected in each
of the pictures in Figure~\ref{fig:allboxes} are translates of one
another in agreement with the expressions for their vertices appearing
in the proof of Theorem~\ref{thm:chain}.
\begin{figure}
\hbox{\includegraphics[height=2in]{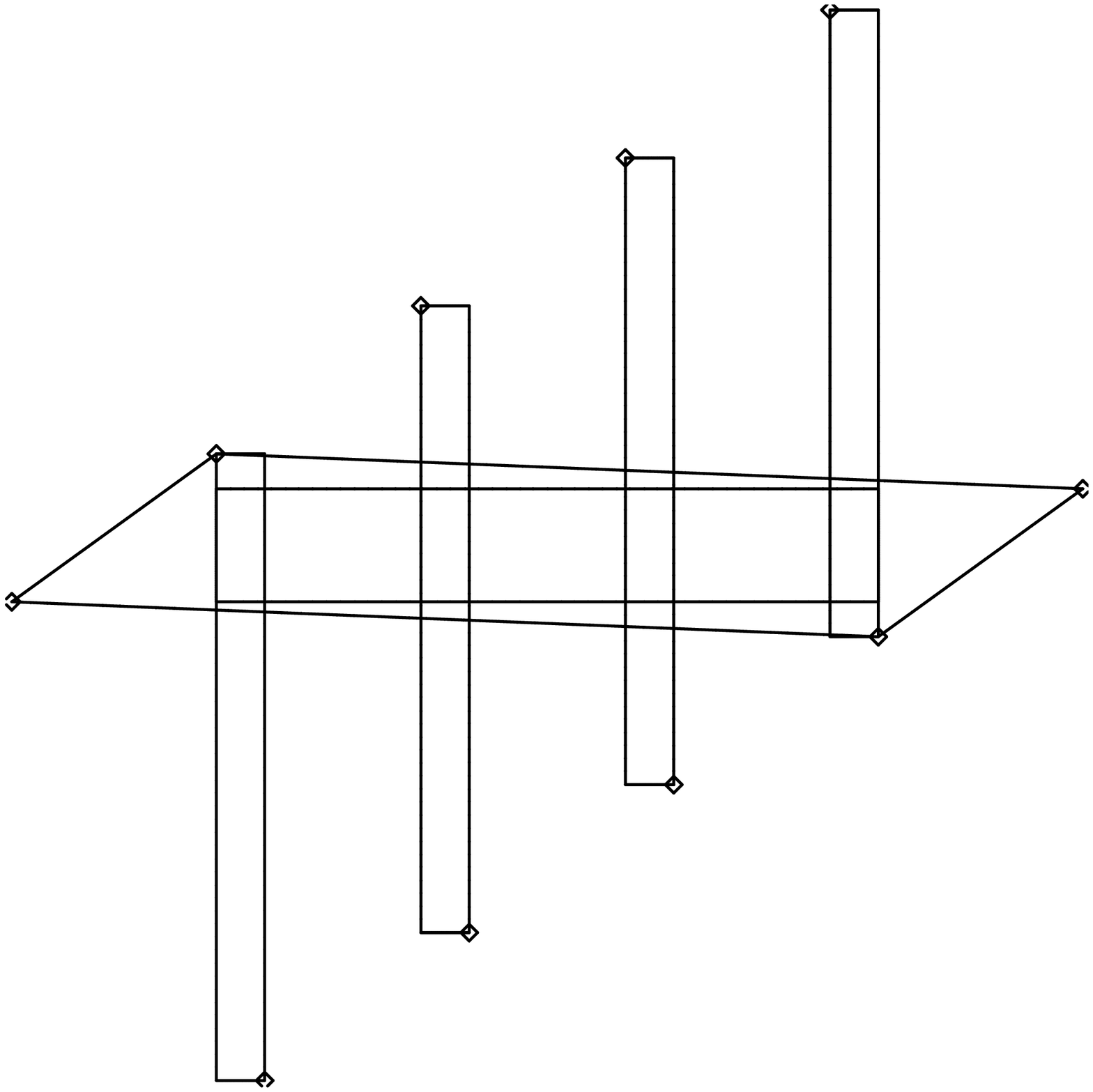} \hskip1in
             \includegraphics[height=2in]{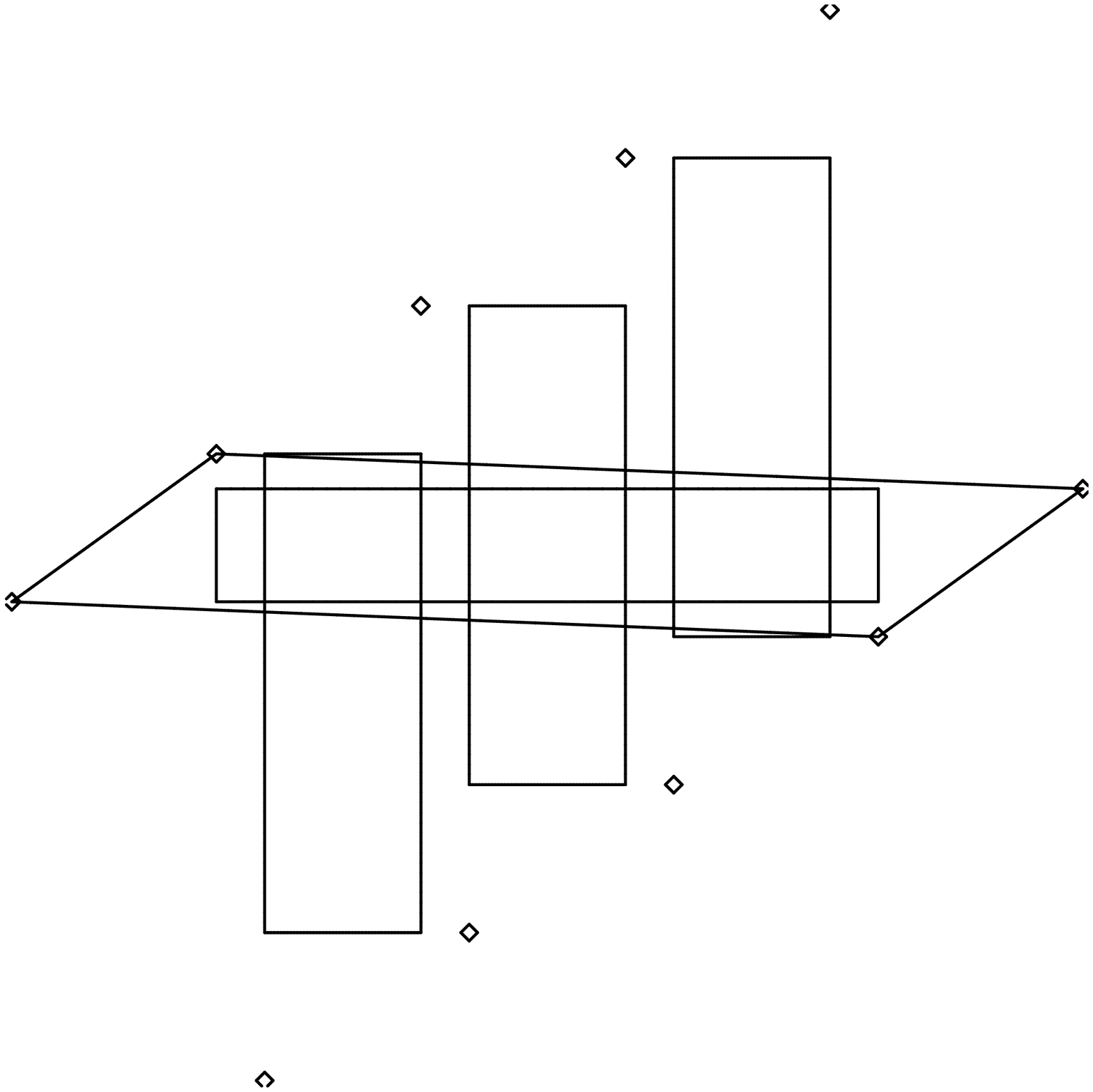}}
\caption{\label{fig:allboxes} All Successor Boxes}
\end{figure}
What Figure~\ref{fig:allboxes} shows is that the inner boxes of two
successor cells intersect in at most a vertical segment and that the
union of all of these boxes covers the inner box of the original
cell.

Combining this with the proof of Theorem~\ref{thm:minpt} we find that
the chain of boxes shows a systematic increase of height and decrease
of width as we step through the chain.

\section{Pitman's Theorem}Jane Pitman \cite{pitman58} related divided
cells, which are the reduced cells of an inhomogeneous approximation
problem, to the reduced bases of the corresponding homogeneous problem
given by the continued fraction algorithm.  Consequences of her work
are an easy proof of the existence of divided cells (given here as the
Corollary to Theorem~\ref{thm:Pitman}) and tools for recognizing
minimal points.

\begin{thm}\label{thm:Pitman}The cell of a Gaussian reduced form 
gives rise to two I-reduced N-cells.  If the Gaussian cell has $a_1\geq0$,
then the matrices corresponding to the other cells are obtained by
multiplying its matrix by
\begin{equation}
\matthreer{1&0&0\\1&1&0\\0&0&1\\}
\textbox{ or }
\matthreer{1&-1&1\\0&1&0\\0&0&1\\}.
\label{eq:neighborplus}
\end{equation} 
The union of the inner boxes of these three cells is (apart from
duplication on the boundary) a fundamental domain for the lattice.
\end{thm}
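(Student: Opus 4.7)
The plan is to verify the matrix claims directly and then handle the fundamental-domain assertion by combining an area computation with an explicit description of where the three boxes sit. Write $B$ as in (\ref{eq:inhommat}), so that the G-cell hypothesis together with $a_1\geq 0$ gives $a_0\geq a_1\geq 0$ and $b_1\geq -b_0\geq 0$, and let $M_1$, $M_2$ denote the two matrices in (\ref{eq:neighborplus}). Geometrically, $M_1$ replaces the lattice basis $(v_1,v_2)$ with $(v_1+v_2,v_2)$ and keeps the origin offset $p$ fixed, while $M_2$ replaces the basis with $(v_1,v_2-v_1)$ and simultaneously shifts $p$ by $v_1$. In each case, the resulting cell shares two vertices with the original Gaussian cell and has one genuinely new vertex.

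For each product $BM_i$ I would read off the new entries $a_0', a_1', b_0', b_1', c_0', c_1'$ and check the I-reduced inequalities $a_0'\geq\absval{a_1'}$ and $b_1'\geq\absval{b_0'}$ directly from the hypotheses on $B$. The N-cell condition $a_1'b_0'\geq 0$ should then follow because the shear has arranged the two entries of the new first column to share a sign. Finally I would verify that the new offset $(c_0',c_1')$ still places the base vertex in the third quadrant, so the new cell is indeed divided; this is where the origin shift built into $M_2$ is essential, since without it the candidate base vertex would fall in the wrong quadrant.

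The real content is the fundamental-domain assertion, and my approach has two steps. First, an area computation: using the formulas width $=a_0-\absval{a_1}$ and height $=b_1-\absval{b_0}$ for the inner box of an I-reduced cell, I would express the areas of the three inner boxes in terms of the original $a_i$, $b_j$ and verify that the sum equals $a_0b_1-a_1b_0$, the area of a fundamental parallelogram of the lattice in $(\xi_I,\eta_I)$ coordinates. Second, a disjointness argument: locate the corners of each inner box by combining its base vertex with its width and height, and check that consecutive boxes share a common boundary edge, in the same pattern already sketched at the end of the proof of Theorem~\ref{thm:chain}. The main obstacle I anticipate is the case analysis in this second step, because the way the three rectangles fit depends on the signs and relative magnitudes of the entries; but once the placements are established, disjointness together with the area identity forces the union to be the full fundamental domain up to boundary. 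The Corollary on the existence of divided cells will then be immediate, since any point of the plane, reduced modulo the lattice, falls into one of the three inner boxes and therefore yields a divided cell.
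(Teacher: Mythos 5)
Your overall plan is workable and is genuinely different from the paper's argument: the paper proves this theorem entirely by a dissection picture (Figure~\ref{fig:threebox}), cutting the fundamental parallelogram of the Gaussian cell into pieces and translating them by lattice vectors onto the portions of the three inner boxes lying outside the cell. Your matrix verifications and area bookkeeping would make that picture rigorous, which is worth doing. One caution on the first half: the inequalities you need for the products $BM_i$ will not follow from the bare I-reduced and G-cell conditions on $B$ alone --- one can write down I-reduced matrices with $a_1b_0\le 0$ for which, say, $a_0+b_0\ge\absval{a_1+b_1}$ fails --- so you must invoke the full strength of the hypothesis that $B$ comes from a \emph{Gaussian reduced} form, and you should also decide which inequality convention for ``I-reduced'' is in force (the paper's stated definition $a_0\ge\absval{a_1}$, $b_1\ge\absval{b_0}$ does not match the inequalities $a_0\ge\absval{b_0}$, $b_1\ge\absval{a_1}$ derived just before it, and the box dimensions change accordingly).

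The genuine gap is in your second step. Pairwise disjointness of the three boxes \emph{in the plane}, together with the identity (sum of areas) $=\absval{a_0b_1-a_1b_0}$, does not force their union to be a fundamental domain: a connected region of exactly the right area can still contain two points congruent modulo the lattice (a long thin rectangle of area equal to the covolume is the standard example), and then it also fails to cover the torus. What you actually need is that the three open boxes have pairwise disjoint images in the quotient of $\Reals^2$ by the lattice $\Lambda$ generated by the first two columns of $B$ --- equivalently, that no $\Lambda$-translate of one box meets another. Each inner box individually injects into the quotient, being contained in a single cell, but the three boxes belong to cells of different shapes and positions, so cross-overlaps modulo $\Lambda$ must be excluded explicitly; adjacency of the boxes along edges in the plane says nothing about this. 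This is precisely what the paper's dissection supplies: it exhibits the specific lattice translations matching the pieces of the G-cell outside the boxes with the pieces of the boxes outside the G-cell, which yields injectivity and surjectivity simultaneously. To repair your argument, replace ``disjoint in the plane'' by ``disjoint modulo $\Lambda$,'' which in practice means identifying, for each box, the finitely many $\Lambda$-translates of the G-cell that it meets and checking those finitely many potential overlaps directly.
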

\begin{proof} Figure~\ref{fig:threebox} gives a ``proof without
  words''. It illustrates how the fundamental domain of the given
  Gaussian cell may be cut into pieces that may be translated and
  reassembled to form the union of the three boxes described in the
  statement of the theorem. The parallelogram whose sides are not
  horizontal or vertical is the given cell. The box in the center of
  the figure is the inner box of this cell.  The other boxes are the
  inner boxes of N-cells described in the statement of the theorem.
  The dashed line divides the part of the original cell outside the
  boxes into pieces congruent to the portions of the boxes outside the
  cell.
\end{proof} 
\begin{figure}
  \begin{center}
  \includegraphics[height=3in]{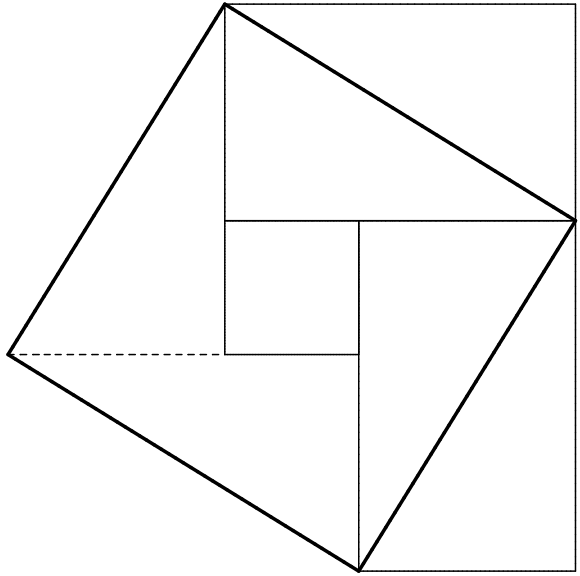}
\end{center}
  \caption{\label{fig:threebox} Three boxes form a fundamental domain}
\end{figure}

\begin{cor}Every linear inhomogeneous problem in
  $\Reals^2$ has divided cells.
\end{cor}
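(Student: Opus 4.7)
The plan is to deduce the corollary directly from Theorem~\ref{thm:Pitman}, combined with classical Gauss reduction of lattice bases. The inner box of an I-reduced cell is, by construction, precisely the locus of those origin positions (relative to the base vertex) for which the cell is divided; Pitman's theorem shows that three such inner boxes --- one G-cell together with two associated N-cells --- cover a fundamental domain of $\Integers^2$. Hence, as soon as we can exhibit any I-reduced Gaussian shape for $F_I$, the actual origin of the $(\xi_I,\eta_I)$-system must, modulo the lattice, land in one of these three boxes, producing a divided cell.

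Concretely, I would proceed in three steps. First, apply the classical Gauss reduction algorithm to any basis of $\Integers^2$ with respect to the axes of $F_I$, producing a Gaussian-reduced basis with $a_0\geq\absval{a_1}$, $b_1\geq\absval{b_0}$, and $a_1b_0\leq0$; this step ignores $c_0$ and $c_1$ and is purely a fact about indefinite homogeneous forms. Second, invoke Theorem~\ref{thm:Pitman} (or its obvious reflected analogue when $a_1<0$) to produce two additional I-reduced N-cells whose inner boxes, together with that of the G-cell, tile a fundamental domain of the lattice. Third, translate the base vertex by the appropriate lattice element so that the origin of $(\xi_I,\eta_I)$ lies in one of these three inner boxes; the corresponding I-reduced cell is then divided.

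The real content has already been absorbed into Theorem~\ref{thm:Pitman}, so what remains is essentially bookkeeping. The only items requiring any care are articulating the equivalence between ``the origin lies in the inner box'' and ``the cell is divided,'' and noting that the $a_1<0$ case is handled by the evident symmetry of~(\ref{eq:neighborplus}). Neither is a genuine obstacle; the proof of the corollary is little more than an observation once Pitman's theorem is in hand.
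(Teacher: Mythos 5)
Your proposal is correct and follows the paper's own route exactly: Gauss-reduce the linear part, invoke Theorem~\ref{thm:Pitman} so that the three inner boxes tile a fundamental domain, and locate the origin of the $(\xi_I,\eta_I)$-system (the intersection of the axes) in one of those boxes modulo the lattice. You have merely made explicit the bookkeeping --- the equivalence between ``origin in the inner box'' and ``cell is divided,'' and the $a_1<0$ case --- that the paper leaves implicit.
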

\begin{proof} Employ the homogeneous theory to reduce the linear part
  $(a_0x+b_0y)(a_1x+b_1y)$. Then locate the intersection of the axes
  in Figure~\ref{fig:threebox}.  The cell corresponding to that box is
  a desired divided cell.
\end{proof}
The N-cells described by Theorem~\ref{thm:Pitman} are called the
\emph{neighbors} of the G-cell in that theorem.  One of these
neighbors is characterized by $\absval{a_0/b_0}\geq2$; the other by
$\absval{b_1/a_1}\geq2$. Conversely, each of these inequalities allows
the construction of a neighboring G-cell of a given N-cell.

If both inequalities hold, then the N-cell serves
as an immediate link between consecutive reductions of the linear part
of $F_I(x,y)$.  However, it is also possible to find N-cells for which
neither of these will hold, so that they are not neighbors of a
G-cell.  Such cells will be considered in the next section.
 
The boxes shown in Figure~\ref{fig:threebox} give the matrices shown
in (\ref{eq:neighborplus}). The third column affects only the location
of a cell and not its shape, and is significant only for
describing cells having some particular relation to the original G-cell.

When $a_1\leq0$, the transition matrices of (\ref{eq:neighborplus})
are replaced by
\begin{equation}
\matthreer{1&1&0\\0&1&0\\0&0&1\\}
\textbox{ or }
\matthreer{1&0&0\\-1&1&0\\0&0&1\\}.
\label{eq:neighborminus}
\end{equation} 
(If $a_1=0$, its sign should be chosen opposite to the sign of $b_0$;
if $a_1=b_0=0$ the different constructions only involve cells with
degenerate boxes.)

\section{Superfluous Cells}
This section investigates the role of the I-reduced cells that are
neither G-cells nor neighbors of G-cells. We refer to such cells as
\emph{superfluous cells} for a reason that is given in
Theorem~\ref{thm:superfluous}.

Figure~\ref{fig:superfluous} shows the portion of a chain of divided
cells starting with a cell $C_-$ for which $-2<a_0/b_0<-1$ and
$b_1/a_1<-2$ (the values used when drawing the figure were
$-(3+\sqrt5)/4\approx-1.309016994$ and
$-(3+\sqrt5)\approx-5.236067977$). The figure also includes the inner
box of the first cell that is seen to also be the inner box of
\emph{all} cells shown. All but the last of these has a unique
successor, and the figure shows this chain of unique successors.  The
last cell shown, $C_+$, has $a_0/b_0<-2$, so there will be a choice of
possible successors, none of which are shown.  For all the cells $C$
shown in Figure~\ref{fig:superfluous}, the cell $C_+$ will be called
the \emph{forward anchor} of $C$ and $C_-$ will be called the
\emph{backward anchor} of $C$.

\begin{figure}
\begin{center}
  \includegraphics[height=3in]{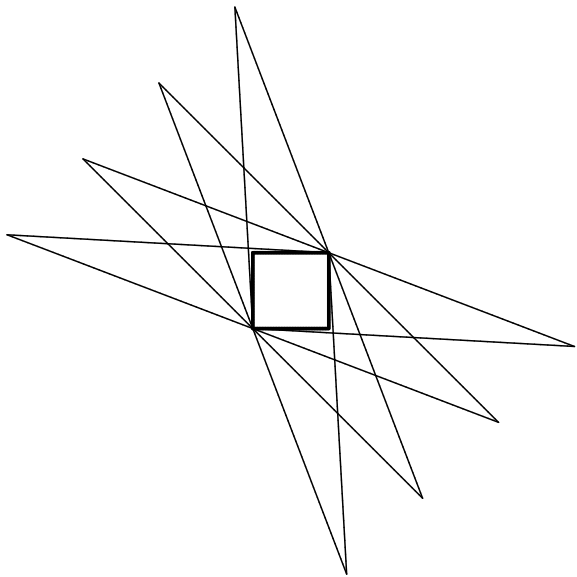}
 \end{center}
  \caption{  \label{fig:superfluous} Superfluous cells}
\end{figure}

\begin{thm}\label{thm:superfluous}For a superfluous
  cell, the anchors are uniquely determined. For every vertex of a
  superfluous cell, a vertex of one of the anchors is nearer in the
  extended partial order.
\end{thm}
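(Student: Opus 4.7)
My plan is to split the statement into two parts: uniqueness of the anchors, and the claim that each vertex of a superfluous cell has a nearer vertex among those of the anchors. The first part will follow from the unique-successor and unique-predecessor properties of superfluous cells. The second part will be proved by tracking vertices along the chain from $C$ to an anchor, using the common inner box together with Theorems~\ref{thm:convex} and~\ref{thm:linord} to compare them on a suitable lattice line.

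For uniqueness: by the characterization of neighbors following Theorem~\ref{thm:Pitman}, an I-reduced cell fails to be either a G-cell or a neighbor of a G-cell exactly when both $\absval{a_0/b_0}<2$ and $\absval{b_1/a_1}<2$. The last paragraph of the proof of Theorem~\ref{thm:chain} shows that $\absval{a_0/b_0}<2$ forces a unique successor; the analogous argument, obtained by exchanging the two rows of the matrix $S$ of (\ref{eq:transmat}), shows that $\absval{b_1/a_1}<2$ forces a unique predecessor. Iterating in each direction stays within the superfluous region until one of the ratios reaches $2$, at which point the chain exits; by definition these terminal cells are the anchors $C_+$ and $C_-$, and they are unique because the chain itself was.

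For the nearer-vertex claim, fix a superfluous cell $C$ and a vertex $v$ of $C$, which by symmetry we may assume lies in the first quadrant. Because every cell from $C_-$ through $C_+$ shares a common inner box, we can track how the four vertices migrate as we step through the chain. In the divided cell step described in the proof of Theorem~\ref{thm:chain}, the top vertices of a successor lie on the extension of the left side of the current cell, and the bottom vertices lie on the extension of the right side. Chasing these extensions through the finite portion of the chain between $C$ and the appropriate anchor produces a lattice line $L$ through $v$ that also contains a vertex $w$ of $C_+$ or $C_-$ in the first quadrant. Theorem~\ref{thm:convex} yields a unique maximum of $F_I$ on $L$ in this quadrant, and Theorem~\ref{thm:linord} combined with the invariance of the inner box forces $w$ to lie strictly closer to that maximum than $v$, so that $w$ is nearer than $v$ in the extended partial order.

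The main obstacle will be to specify, for each of the four vertices of $C$, exactly which anchor supplies the dominating vertex and which lattice line $L$ realizes the comparison; the bookkeeping splits according to the quadrant and the sign of $b_0$ at each step, and an induction on the length of the chain segment between $C$ and the anchor is likely needed to handle long superfluous runs cleanly.
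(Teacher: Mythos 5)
Your uniqueness argument follows the paper's line, but it silently assumes the forward and backward chains actually terminate. Saying the iteration continues ``until one of the ratios reaches $2$'' begs the question of why it ever does: the anchors exist only if the chain of unique successors (resp.\ predecessors) leaves the superfluous region after finitely many steps. The paper supplies the mechanism: the map sending $a_0/b_0$ to the corresponding ratio of the successor is expansive with fixed points $\pm1$, so apart from degenerate cases the iterates must escape the band $1\leq\absval{a_0/b_0}\leq2$. Without some such argument your $C_+$ and $C_-$ are not known to exist, and ``unique because the chain is'' has nothing to apply to.

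For the second claim your vertex-chasing scheme is a genuinely different route, and as you concede it is not closed: the ``bookkeeping'' you defer is exactly the content of the claim. The paper avoids all of it with one observation you brush past even though you state its premise. Since every cell from $C_-$ to $C_+$ has the \emph{same} inner box, and two of the four vertices of each cell in the run are vertices of that box (they are the inner vertices of an N-cell), those two vertices of a superfluous cell are \emph{literally vertices of the anchors} --- there is nothing to prove for them, no lattice line and no appeal to Theorem~\ref{thm:convex} needed. That leaves only the two outer vertices, and these all lie on the single lattice line parallel and adjacent to the line through the inner vertices; on that line, within a quadrant, Theorem~\ref{thm:convex} orders the points so that the anchor's outer vertex (the extremal one in the run) is nearer in the extended partial order than the outer vertex of any intermediate superfluous cell. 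Your plan of tracking each vertex through every divided-cell step and locating a comparison line per step is not wrong in spirit, but it multiplies cases with the sign of $b_0$ and the quadrant at each step, whereas the inner/outer dichotomy collapses the whole problem to one fixed line. I would rework the second half around that dichotomy rather than the induction you sketch.
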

\begin{proof} 
Since $1\leq\absval{a_0/b_0}\leq2$, the proof of
Theorem~\ref{thm:chain} shows that the divided cell algorithm involves
a unique successor that is also an N-cell.  As long as that cell is
superfluous, the algorithm generates a unique forward chain.  A closer
examination of the function giving $a_0/b_0$ for the successor in
terms of the corresponding quantity in the original cell is an
expansive mapping with $\pm1$ as fixed points.  From this, it follows
that, apart from degenerate cases, the chain starting from any
superfluous cell will reach a neighbor of a G-cell in a finite number
of steps (this is not difficult to show, but the details are awkward
to express, so they will be omitted).  The process stops at the
\emph{forward anchor} of the superfluous cell.  The process of
stepping backwards through the chain of divided cells is governed by
the ratio $b_1/a_1$ in the same way, leading to the \emph{backward
anchor} of the original cell.  Thus, Figure~\ref{fig:superfluous}
describes the \emph{only} way that superfluous cells can occur and
relates these cells to the anchors. A study of the explicit matrix
relating a superfluous cell to its successor shows that the anchors
are attached to consecutive reduced bases of the lattice.

Two of the vertices of a superfluous cell are also vertices of its
inner box.  These will be called the \emph{inner vertices} of the
cell.  The inner vertices are shared with all cells shown in
Figure~\ref{fig:superfluous} including the anchors, so they have now
been found in a non-superfluous cell.

The remaining vertices of the cells in Figure~\ref{fig:superfluous}
(the \emph{outer vertices}) lie on a lattice line parallel to and
adjacent to the line joining the inner vertices, and one of the
vertices of an anchor will be nearer in the extended partial order
than a given outer vertex of a superfluous cell.
\end{proof} 

\section{A rigorous Framework}The emphasis here has been visual.
Figures were used to illustrate the constructions and proofs.  These
figures were drawn using the \emph{Maple} Symbolic Computation
System.  In order to tell the system what to draw, the cells and boxes
were represented by matrices like $B$ of (\ref{eq:inhommat}). 

The visual approach was present in \cite{del47}, but was not used much
by subsequent authors.  Computers have facilitated the re-introduction
of graphics into exposition, including the use of color where
appropriate (the figures in this paper were presented in color at the
conference). At the same time, increasing fluency in the language of
Linear Algebra has encouraged the use of matrices to represent the
objects met in the study.  Our intent here was to use these
developments to present old results in a way that will encourage new
research.

Some weaknesses of the Divided Cell \emph{Algorithm} have appeared in
our exposition, but we have also shown that its application to
Inhomogeneous Diophantine Approximation can rely on methods like the
ordinary continued fraction that are associated to the Homogeneous
Markoff Spectrum.  Divided Cells become a tool for organizing the
subject rather than a device for computing properties of individual
problems.


\section{Acknowledgments}
We thank Takao Komatsu for all he did to make to make this conference
a success. We acknowledge the funding that he obtained for our
participation in the conference.

The inspiration to study divided cells came from Bill Moran.  We also
acknowledge the funding that brought us (separately) to Adelaide for
that work and apologize tor the long delay in producing the fruits of
that work.

\bibliographystyle{plain}  

\bibliography{bumby}

\end{document}